
\documentclass[leqno,12pt]{article}

\usepackage{amsmath}
\usepackage{amscd}
\usepackage{amsopn}
\usepackage{amsthm}
\usepackage{amsfonts,amssymb}
\usepackage{amsfonts,bbm}
\usepackage{latexsym}

 \setlength{\textheight}{24cm}
\setlength{\topmargin}{-1cm}
\setlength{\textwidth}{15cm}
 \setlength{\hoffset}{-5.5mm}
 \setlength{\voffset}{-7mm}


\newtheorem{theorem-nonumber}{THEOREM}

\newtheorem{lemma}{LEMMA} 
\newtheorem{proposition}[lemma]{PROPOSITION}
\newtheorem{corollary}[lemma]{COROLLARY}
\newtheorem{theorem}[lemma]{THEOREM}
\newtheorem{remark}[lemma]{REMARK}

\newtheorem{example}[lemma]{EXAMPLE}

\newcommand{\real}{\mathbbm{R}}
\newcommand{\nat}{\mathbbm{N}}

\newcommand{\limn}{\lim_{n \to \infty}}



\newcommand{\ve}{\varepsilon}

\newcommand{\reald}{{\real^d}}
\newcommand{\realt}{{\real^2}}

\newcommand{\und}{\quad\mbox{ and }\quad}

\newcommand{\ov}{\overline}


\newcommand{\C}{\mathcal C}  

\newcommand{\F}{\mathcal F}

\renewcommand{\S}{\mathcal S}
\newcommand{\M}{\mathcal M}

\newcommand{\dist}{\mbox{\rm dist}}

\newcommand{\supp}{\operatorname*{supp}}

\newcommand{\itemframe}%
{\setlength{\parskip}{10pt}\begin{enumerate} \setlength{\topsep}{10pt}%
\setlength{\itemsep}{15pt}\setlength{\parsep}{5pt}}

\newcommand{\vy}{\ve_y}
\newcommand{\vx}{\ve_x}

\newcommand{\Px}{\mathcal P(Y)}


\newcommand{\uc}{{U^c}}

\newcommand{\vc}{{V^c}}
\newcommand{\wc}{{W^c}}

\newcommand{\ext}{\operatorname*{ext}}

\newcommand{\nxk}{N}

\date{}  

\begin{document}
\title{Jensen measures in potential theory}
\author{Wolfhard Hansen and Ivan Netuka
\thanks{Research supported in part by the project MSM
0021620839 financed by  MSMT and by the grant 201/07/0388
of the Grant Agency of the Czech Republic.}}
\maketitle

\begin{abstract}
It is shown that,  for open sets in classical potential theory and -- more generally -- for elliptic 
harmonic spaces, the set of Jensen measures for a point is a~simple union
of closed faces of a compact convex set  which has been thoroughly studied
a long time ago. In particular, the set of extreme Jensen measures can be 
immediately identified. 
The results hold even without ellipticity (thus capturing also many examples for the heat equation) provided 
a~rather weak  approximation property  for superharmonic functions or a certain transience property holds.
\end{abstract}

Starting with \cite{cole-ransford-subharmonicity,cole-ransford-jensen} several papers have been written in the last
years, where Jensen measures play a central role \cite{roy, poletsky97, khabibullin, alakhrass-thesis,
perkins-thesis, ransford}.
The purpose of this  note is to stress that (at least in classical potential theory                   
and -- more generally --  for  elliptic $\mathcal P$-harmonic spaces~$Y$) the set   of Jensen measures 
(for a point $x$ with respect to~$Y$) is a simple union of 
closed faces of the thoroughly studied compact convex set  of representing measures (for $x$
with respect to the cone of potentials on~$Y$).
In~particular, the set of extreme Jensen measures can
be immediately identified using results which are known even for general balayage spaces since 25 years
(Theorem~\ref{main} and its consequences Corollary~\ref{ap-corollary} and Theorem~\ref{main-ho}). 

The statements hold even without ellipticity (thus capturing also many examples for the heat equation) provided
$Y$ satisfies some (rather weak) approximation property (AP) for superharmonic functions or a  transience property (TP).
In classical potential theory, (TP) is  not only a sufficient, but also a necessary condition for the property that every probability measure, which 
has compact support in~$Y$ and is a representing measure for~$x$ with respect to potentials on~$Y$, is a Jensen measure for~$x$
(Theorem \ref{class-infty}).  Further, for  \emph{bounded} open sets~$Y$ in~$\reald$, this property holds if and only if
every connected component of the boundary of~$Y$ contains a point in the closure of the set of regular points of $Y$
(Corollary \ref{equiv}). 

For  Jensen measures with respect to compact sets see the equalities (\ref{JK}) and~(\ref{JKe}).

To work in reasonable generality, let $Y$ be a $\mathcal P$-harmonic Bauer space (which covers
also the heat equation).
Let $\Px$ be the set of all continuous real potentials on $Y$
and, for every open set $U$ in $Y$, let $\S(U)$ denote the set of all
superharmonic functions on $U$. We recall that $\S^+(Y)$ is 
the set of all superharmonic limits of increasing sequences in $\Px$.

Given a Borel set $E$ in $Y$, let $\C(E)$ be the set of all continuous real functions on~$E$.
For every convex cone $\F$ of lower semicontinuous  functions $f\colon E\to (-\infty,\infty]$ and $x\in Y$,
let~$M_x(\F)$ denote the set of all positive Radon measures~$\mu$ on~$Y$ which are supported
by~$E$ such that, for every $f\in \F$, $\mu(f^-)<\infty$ and $\mu(f)\le f(x)$\footnote{We 
 write $\mu(f)$ instead of $\int f\,d\mu$.}, and let $\ext M_x(\F)$
denote the set of all extreme points of the convex set~$M_x(\F)$.  We recall that
$$
        \ext M_x(\Px)=\{\vx^A\colon A\subset Y\}=\{\vx\}\cup \{\vx^A\colon A\mbox{ Borel  set}, \ x\notin A\}
$$
(see \cite{moko-ext}, \cite[VI.2.2,VI.12.4-5]{BH}). Moreover, if $A$ is a (Borel) set in $Y$ such that $x\notin A$
and $\ve_x^A\ne \vx$, then, by \cite[VI.2.2, 4.1-4.4]{BH},
 there exists a finely closed $G_\delta$-set~$F$ such that 
\begin{equation}\label{AF}
A\subset F\subset \ov A\setminus \{x\} \und \vx^A=\vx^F.
\end{equation} 
Thus
 \begin{equation}\label{ext-moko}
       \ext M_x(\Px)=\{\vx\}\cup \{\vx^F\colon F\mbox{ finely closed $G_\delta$-set},\ x\notin F\}.
\end{equation}  

Given $x\in Y$ and an open neighborhood $U$ of $x$, let $J_x(U)$ denote the set of all
 \emph{Jensen measures} for $x$ with respect to $U$, that is, 
\begin{equation}\label{def-jensen}
 J_x(U) :=\{\mu\in M_x(\S(U))\colon \supp \mu\subset \subset U\}.\footnote{For sets $A\subset B\subset Y$, we write $A\subset \subset B$,
if the closure $\ov A$ of $A$ is a compact subset of $B$.}
\end{equation} 
For example, for every open set $V$ such that $x\in V\subset\subset U$, the harmonic measure $\mu_x^V:=\vx^\vc$
is contained in $J_x(U)$. 
We observe that  $\S(U)$ in (\ref{def-jensen})  can be replaced by $\S(U)\cap \C(U)$,
since every $u\in \S(U)$ is the limit of an increasing sequence in \hbox{$\S(U)\cap \C(U)$}.
If constants are harmonic, then, of course, every $\mu\in J_x(U)$ is a~probability measure. 

Let us note that our assumptions do not exclude that $Y$ is compact. An example is
given by the operator $u''=u$ on the circle $\{z\in \realt\colon |z|=1\}$, where the derivatives
are taken with respect to arc length.

\begin{proposition} If $Y$ is compact, then $J_x(Y)=M_x(\Px)$ and 
$$ \ext J_x(Y)=\{\ve_x\}\cup  \{\ve_x^\vc\colon x \in V\subset Y, \  V\mbox{ \rm finely open $K_\sigma$-set}\} .
$$
\end{proposition}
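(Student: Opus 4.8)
The plan is to prove the two assertions separately. For the first, $J_x(Y) = M_x(\S(Y))$ trivially since when $Y$ is compact the support condition $\supp\mu \subset\subset Y$ is automatic for every Radon measure on $Y$. So it suffices to show $M_x(\S(Y)) = M_x(\Px)$. The inclusion $M_x(\S(Y)) \subset M_x(\Px)$ is immediate because $\Px$, the set of continuous real potentials, need not be contained in $\S(Y)$ in general — so actually I must be careful here. Let me instead argue: every $\mu \in M_x(\Px)$ satisfies $\mu(p) \le p(x)$ for all $p \in \Px$. To see $\mu \in M_x(\S(Y))$, take $u \in \S(Y) \cap \C(Y)$ (which suffices by the remark in the excerpt); since $Y$ is compact, $u$ is bounded, and one expects that $u$ can be written, up to an additive constant and harmonic correction, in terms of potentials — more precisely, by Riesz-type decomposition on the compact harmonic space, a bounded superharmonic function differs from a potential by a bounded harmonic function $h$, and on a compact $\mathcal P$-harmonic space the only nonnegative superharmonic functions dominating a bounded harmonic function force $h$ to be essentially controlled. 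The cleanest route is: for $u \in \S^+(Y)$, $u$ is the increasing limit of potentials in $\Px$, hence $\mu(u) \le u(x)$; then for general $u \in \S(Y)\cap\C(Y)$, write $u = (u + c) - c$ where $u + c \in \S^+(Y)$ for large $c$ if constants are superharmonic — but constants need not be harmonic. So the key structural input I would invoke is that on a compact $\mathcal P$-harmonic space, $\S(Y) \cap \C(Y) \subset \S^+(Y) - \S^+(Y)$ or at least that $\mu(u) \le u(x)$ follows from $\mu \in M_x(\Px)$ by a reduction/balayage argument. The converse $M_x(\Px) \supset M_x(\S(Y))$ likewise requires knowing $\Px \subset \S^+(Y)$, which holds since each continuous potential is trivially the limit of the constant sequence. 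Hence $M_x(\S(Y)) \subset M_x(\S^+(Y)) = M_x(\Px)$, giving one inclusion for free; the content is the reverse inclusion.

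For the characterization of extreme points, the strategy is to transport the known description (\ref{ext-moko}) of $\ext M_x(\Px)$ through the equality $J_x(Y) = M_x(\Px)$. By (\ref{ext-moko}), $\ext J_x(Y) = \{\vx\} \cup \{\vx^F : F \text{ finely closed } G_\delta,\ x\notin F\}$. The task is to re-express each such $\vx^F$ as $\vx^{V^c}$ for a finely open $K_\sigma$-set $V$ with $x \in V \subset Y$, and conversely. Given a finely closed $G_\delta$-set $F$ with $x \notin F$, set $V := Y \setminus F$; then $V$ is finely open, contains $x$, and since $Y$ is compact (hence $\sigma$-compact) and $F$ is a $G_\delta$, $V$ is an $F_\sigma$ in a compact space, i.e. a $K_\sigma$-set, and $\vx^F = \vx^{V^c}$ by definition. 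Conversely, given a finely open $K_\sigma$-set $V$ with $x \in V \subset Y$, put $F := Y \setminus V = V^c$; this is finely closed and a $G_\delta$ (complement of a $K_\sigma$ in compact $Y$), and $x \notin F$. The only subtlety is matching the nondegeneracy: $\vx^F \ne \vx$ in (\ref{ext-moko}) versus the formulation in the Proposition which appears to allow $V = Y$ giving $V^c = \emptyset$ and $\vx^\emptyset = \vx$; so the two lists coincide once we observe that $\vx$ is already listed separately and that $V = Y$ (whence $F = \emptyset$) recovers exactly the point $\vx$.

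I would therefore organize the write-up as: (1) reduce $J_x(Y) = M_x(\Px)$ to the set equality $M_x(\S(Y)) = M_x(\Px)$ using compactness; (2) prove $M_x(\S(Y)) \subset M_x(\Px)$ via $\Px \subset \S^+(Y) \supset \S(Y)\cap\C(Y)$ being false in general — so instead prove both inclusions by a balayage/Riesz decomposition argument on the compact space, citing \cite[VI]{BH}; (3) apply (\ref{ext-moko}) and translate finely closed $G_\delta$-sets to finely open $K_\sigma$-sets via complementation in the compact (hence $\sigma$-compact) space $Y$. The main obstacle is step (2): establishing $M_x(\Px) \subset M_x(\S(Y))$, i.e. that a representing measure with respect to continuous potentials automatically respects every continuous superharmonic function on the compact space. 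This should follow from the fact that on a compact $\mathcal P$-harmonic space every $u \in \S(Y) \cap \C(Y)$ admits, after adding a suitable potential, a representation making $\mu(u) \le u(x)$ transparent — concretely, $\inf_Y u$ is attained and $u - \inf_Y u$ is a nonnegative superharmonic function, and on a compact space such a function lies in $\S^+(Y)$ provided constants are superharmonic; if constants are not superharmonic one argues via the specific order and the fact that $\Px$ separates points and is dense enough, invoking \cite[VI.8–9]{BH}. I expect this to be short but to require one genuine appeal to the fine-potential-theoretic machinery rather than a purely formal manipulation.
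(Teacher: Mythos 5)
Your overall architecture is right: reduce the first assertion to $M_x(\S(Y))=M_x(\Px)$ (compactness making the support condition automatic), and obtain the second assertion from (\ref{ext-moko}) by passing between finely closed $G_\delta$-sets $F$ with $x\notin F$ and their complements, which are finely open $K_\sigma$-sets containing $x$. That last translation you carry out correctly and in more detail than the paper does. But the heart of the first assertion --- the inclusion $M_x(\Px)\subset M_x(\S(Y))$ --- is exactly where your proposal stalls, and none of the routes you sketch actually closes it. Subtracting $\inf_Y u$ requires constants to be superharmonic, which you concede may fail in a Bauer space; a Riesz decomposition of $u$ into ``potential plus harmonic part'' does not by itself control $\mu(h)$ for the harmonic part $h$ unless you already know something about harmonic functions on $Y$; and the vague appeal to ``fine-potential-theoretic machinery'' is not a proof. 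So as written there is a genuine gap at the one step that carries all the content.

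The missing observation is the \emph{minimum principle with empty boundary}: since $Y$ is compact, $\partial Y=\emptyset$, so the boundary minimum principle (valid on any $\mathcal P$-harmonic space) forces every $u\in\S(Y)$ to be nonnegative --- there are no boundary points at which the liminf condition could fail. Consequently the only harmonic function on $Y$ is $0$, every superharmonic function is a potential, and $\S(Y)=\S^+(Y)$, i.e.\ every $u\in\S(Y)$ is the increasing limit of a sequence in $\Px$. Monotone convergence then gives $\mu(u)\le u(x)$ for every $\mu\in M_x(\Px)$, which is precisely the inclusion you could not establish; the reverse inclusion is, as you note, free from $\Px\subset\S(Y)$. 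This is the paper's entire argument for the first claim, and it makes your worries about constants and about $\S(Y)\cap\C(Y)$ versus $\S^+(Y)-\S^+(Y)$ moot.
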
 

\begin{proof} Let $Y$ be compact. Then $\S(U)=\S^+(U)$ by the minimum principle
(and the constant function $0$ is the only harmonic function on $Y$). Hence $J_x(Y)=M_x(\Px)$,
and the proof is finished by (\ref{ext-moko}).
\end{proof} 

In the following, we assume that $Y$ is not compact. Given an open set \hbox{$U\subset \subset Y$}, let
$$
                                   S(U):=\{u\in \C(\ov U)\colon u|_U\in \S(U)\}.
$$
 If $W$ is an open  neighborhood of  $\ov U$, 
then $(\S(W)\cap \C(W))|_{\ov U}\subset S(U)$. 
So, for $x\in Y$, 
\begin{equation}\label{mju}
\bigcup_{x\in U\subset\subset Y} M_x(S(U))=\bigcup_{x\in U\subset\subset Y} J_x(U)\subset J_x(Y)\subset M_x(\Px).
\end{equation}

\begin{theorem}\label{main}
Let $x\in Y$ and suppose that  the union of all $J_x(U)$, $U\subset \subset Y$, is the set~$J_x(Y)$ . Then
\footnote{If $Y$ is a $\mathcal P$-harmonic Brelot space satisfying the axiom of domination, then we may replace ``V finely open''
by ``V fine domain'' (see \cite[Theorem 12.7]{Fug-fharmonic}).} 
\begin{eqnarray}
    \ext  J_x(Y) &=&\bigcup_{x\in U\subset\subset Y} \ext M_x(S(U))\label{ex-main}\\
                       &=& \{\ve_x\}\cup  \{\ve_x^\vc\colon x \in V\subset \subset Y, \  V\mbox{ \rm finely open $K_\sigma$-set}\} .%
\label{fopen}
\end{eqnarray} 
\end{theorem}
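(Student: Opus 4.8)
The plan is to prove the two equalities separately, with (\ref{ex-main}) being the conceptual heart and (\ref{fopen}) following by a characterization of the relevant extreme points.

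For (\ref{ex-main}): by hypothesis $J_x(Y)=\bigcup_{x\in U\subset\subset Y}J_x(U)$, and by (\ref{mju}) this union equals $\bigcup_{x\in U\subset\subset Y}M_x(S(U))$, an \emph{increasing} union (indexed by the upward-directed family of relatively compact open neighborhoods $U$ of $x$, since $S(U)|_{\ov{U'}}\subset S(U')$ for $U'\subset\subset U$). So $J_x(Y)$ is an increasing union of compact convex sets $M_x(S(U))$, each of which is a face-like subset — in fact I claim each $M_x(S(U))$ is an \emph{extreme} (hereditary) subset of $J_x(Y)$: if $\mu\in M_x(S(U))$ and $\mu=\tfrac12(\mu_1+\mu_2)$ with $\mu_1,\mu_2\in J_x(Y)$, then since $\supp\mu\subset\subset U$ we get $\supp\mu_i\subset\subset U$, and testing against $u\in S(U)$ gives $\mu_i(u)\le u(x)$ from $\mu(u)\le u(x)$ together with convexity/linearity (one needs $\mu_i(u)\le u(x)$ for \emph{both} $u$ and, applying it to functions in $\S(U)\cap\C(U)$ which suffices by the remark after (\ref{def-jensen}), this forces $\mu_i\in M_x(S(U))$). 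Hence every extreme point of $M_x(S(U))$ is extreme in $J_x(Y)$, giving ``$\supseteq$'' in (\ref{ex-main}); conversely, any extreme point of $J_x(Y)$ lies in some $M_x(S(U))$ and is then automatically extreme there, giving ``$\subseteq$''.

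For (\ref{fopen}): fix $U\subset\subset Y$ with $x\in U$. Since $M_x(S(U))\subset M_x(\Px)$ by (\ref{mju}), and $M_x(S(U))$ is a compact convex subset, its extreme points are extreme points of $M_x(\Px)$ that happen to lie in $M_x(S(U))$ — this uses the same hereditary-face argument as above, now applied inside $M_x(\Px)$, together with the fact that $S(U)$ contains the restrictions of functions in $\S(W)\cap\C(W)$ for open $W\supset\ov U$. By (\ref{ext-moko}), $\ext M_x(\Px)=\{\ve_x\}\cup\{\ve_x^F\colon F\text{ finely closed }G_\delta,\ x\notin F\}$. So I must identify which $\ve_x^F$ belong to some $M_x(S(U))$. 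The measure $\ve_x$ always does (take any $U$). For $\ve_x^F$: $\supp\ve_x^F$ is contained in $\ov F$ (in fact in $F\cup\{$irregular-type boundary points$\}$), so relative compactness of the support forces (after passing to $\ov A\sms\{x\}$ as in (\ref{AF})) that $F$ be contained in a relatively compact set; writing $V:=Y\sms F$, finely openness of $V$ and $x\in V$, and $\ve_x^F=\ve_x^{\vc}$; the condition ``$V\subset\subset Y$'' should be read via its complement $F$ being ``co-relatively-compact'', and the $K_\sigma$ condition on $V$ is dual to the $G_\delta$ condition on $F$. The verification that $\ve_x^{\vc}\in M_x(S(U))$ for suitable $U$ when $V$ is finely open, relatively compact, $K_\sigma$, and $x\in V$ uses that $\ve_x^{\vc}(u)\le u(x)$ for superharmonic $u$ by the basic properties of reduction/balayage, plus $\supp\ve_x^{\vc}\subset\ov{V}^c\cup(\text{thin part})\subset\subset Y$.

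The main obstacle I anticipate is the precise bookkeeping in (\ref{fopen}) between the ``finely closed $G_\delta$'' description coming from (\ref{ext-moko}) and the ``finely open $K_\sigma$, $V\subset\subset Y$'' description in the statement: one must check that $F$ finely closed $G_\delta$ with $x\notin F$ and $\ov{\{\ve_x^F\ne\ve_x\}}$-support relatively compact is \emph{equivalent} (up to not changing the balayage, via (\ref{AF})) to $V=Y\sms F$ being finely open $K_\sigma$ with $x\in V$ and $V\subset\subset Y$, and that the support condition $\supp\ve_x^{\vc}\subset\subset Y$ is exactly captured by $V\subset\subset Y$. This is where the relative-compactness hypothesis on the whole union $J_x(Y)=\bigcup J_x(U)$ is genuinely used, and where one should be careful that passing to the fine-topology $G_\delta$ hull $F$ in (\ref{AF}) stays inside a compact set. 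The harmonic-space axioms (Bauer space, countable base) guarantee the requisite regularity of $\ve_x^{\vc}$ and the $K_\sigma$/$G_\delta$ duality.
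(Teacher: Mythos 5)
Your overall architecture matches the paper's: reduce to the fact that each $M_x(S(U))$ is a (closed) face, then translate the resulting description of extreme points into the form (\ref{fopen}). But the two places where you wave your hands are exactly the two places where the real content lies, and neither of your sketches would close.

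First, the face property. You assert that $\mu=\tfrac12(\mu_1+\mu_2)$ with $\mu\in M_x(S(U))$ and $\mu_1,\mu_2\in J_x(Y)$ ``forces'' $\mu_i(u)\le u(x)$ for $u\in S(U)$ ``by convexity/linearity''. It does not: from $\mu_1(u)+\mu_2(u)\le 2u(x)$ you cannot split the inequality, and membership of $\mu_i$ in $J_x(Y)$ gives you $\mu_i(s)\le s(x)$ only for $s$ superharmonic on all of $Y$, whereas $S(U)$ contains functions (for instance $-h$ with $h>0$ harmonic on $U$ but not extendable) about which $J_x(Y)$ says nothing a priori. That $M_x(S(U))$ is a closed face of $M_x(\Px)$ is a genuine theorem; the paper imports it from \cite[VII.9.5]{BH} and the whole proof of (\ref{ex-main}) is: face property $+$ (\ref{mju}) $+$ the hypothesis. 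Without that input (or a proof of it) your ``$\supseteq$'' direction is unsupported.

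Second, the identification (\ref{fopen}). You try to decide which $\vx^F$ from (\ref{ext-moko}) are extreme Jensen measures by looking at whether $\supp\vx^F$ is relatively compact. That is the wrong criterion: in the situation of Remark \ref{remark}, $\mu={}^Y\!\vx^K$ with $K$ a circle has compact support and lies in $\ext M_x(\Px)$, yet is not a Jensen measure. The correct selection, again from \cite[VII.9.5]{BH}, is
$\ext M_x(S(U))=\{\vx\}\cup\{\vx^A\colon A\text{ Borel},\ x\notin A,\ \beta(\uc)\subset A\}$,
and it is the inclusion $\beta(\uc)\subset A\subset F$ (not compactness of the support) that forces $V:=F^c\subset\ov U$, hence $V\subset\subset Y$. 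Your own bookkeeping in fact runs backwards: you conclude that $F$ lies in a relatively compact set and then put $V:=Y\sms F$, which would make $V$ co-compact rather than relatively compact, so the set you produce is not of the form required in (\ref{fopen}). The ``conversely'' half (every $\vx^\vc$ with $V$ finely open, $K_\sigma$, $x\in V\subset\subset Y$ is extreme) also needs the explicit description (\ref{su-ext}) applied to a $U$ containing $\ov V$; your appeal to ``basic properties of balayage'' only shows $\vx^\vc\in M_x(S(U))$, not that it is an extreme point.
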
 

\begin{proof} 
Let $x\in U\subset \subset Y$. By \cite[VII.9.5]{BH}, $M_x(S(U))$ 
is a~closed face of $M_x(\Px)$. Hence (\ref{mju}) and our assumption  imply (\ref{ex-main}).
Moreover, again by \cite[VII.9.5]{BH},
\begin{equation}\label{su-ext}
         \ext M_x(S(U)) = \{\vx\}\cup \{\vx^A\colon A \mbox{ Borel set}, \ x\notin A, \ \beta(\uc)\subset A\},
\end{equation} 
where $\beta(\uc)$ is the largest finely closed subset of $\uc$ such that $\uc\setminus \beta(\uc)$ is semipolar.
To prove (\ref{fopen}) it is sufficient to know that $\beta(\uc)$ is a subset of $\uc$ containing the interior of $\uc$.

Indeed, given a finely open set $V$ such that $x\in V\subset \subset Y$, 
we may choose an open set $U\subset\subset Y$   such that $\ov V\subset U$,  
and then $\vx^\vc\in \ext M_x(S(U))$, by~(\ref{su-ext}).  Conversely, let $U$ be open, $x\in U\subset\subset Y$, and $\mu\in \ext M_x(S(U)) \setminus \{\vx\}$. Then, by~(\ref{su-ext}) and (\ref{AF}),  there exists  a finely open  $K_\sigma$-set $V$ such that $x\in V\subset \ov U$ and~$\mu=\vx^\vc$. 
\end{proof} 

For  a first application of Theorem \ref{main}, we introduce the  following  approximation property:
\begin{description}
\item[(AP)] For every compact $K$ in $Y$, there exists a bounded\footnote{We 
say that a subset of $Y$ is bounded, if it is relatively compact, unbounded, if not.} 
 open neighborhood ~$U$ of~$K$  such that, for every   $u\in \S(U)\cap \C(U)$, there is a function  $v\in \S(Y)\cap \C(Y)$ 
satisfying  $|u-v|<\ve$ on $K$.
\end{description}

If $F$ is a closed set   in  $Y$, then
the connected components of $F^c$ are open, since $Y$ is locally connected. If $K$ is a compact in $Y$, then
the union $\hat K$ of $K$ and all bounded connected components of $K^c$ is compact
(see \cite[Lemma 1]{gardiner-gowri}; its proof uses only that $Y$ is locally compact and locally connected).

\begin{proposition}\label{ell-ap}
If $Y$ is elliptic, then {\rm(AP)} holds.
\end{proposition}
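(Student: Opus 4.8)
The plan is to verify the approximation property (AP) by exploiting ellipticity to pass from superharmonic functions on a small neighborhood to globally superharmonic functions. First I would fix a compact set $K$ in $Y$. Using local connectedness and relative compactness, I would choose a bounded open neighborhood $W$ of $K$ with $\ov W$ compact, and then consider $\hat W$ (the union of $\ov W$ with the bounded components of its complement), which is again compact by the cited Lemma of Gardiner--Gowrisankaran. The point of ellipticity is that, on an elliptic harmonic space, superharmonic functions have no ``one-sided'' rigidity: a function superharmonic near $K$ can be continued, or approximated, by a function superharmonic on all of $Y$. Concretely, I expect to use the fact that on elliptic spaces the sheaf of superharmonic functions is rich enough that one can solve a Dirichlet-type problem or use a partition-of-unity/gluing argument in combination with strict superharmonic ``bumps'' to correct the behavior outside a slightly larger compact set.

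The key steps, in order, would be: (i) given $u\in\S(U)\cap\C(U)$ for the neighborhood $U$ to be chosen, first replace $u$ by $u+c$ for a large constant so that $u>0$ on $K$ (constants being harmonic, or at least superharmonic, on a Bauer space — if not, add a fixed positive potential), reducing to the case of a positive continuous superharmonic function; (ii) using ellipticity, produce on an intermediate open set $U'$ with $K\subset U'\subset\subset U$ a continuous potential $p$ on $Y$ (or a function in $\S^+(Y)\cap\C(Y)$) with $p\ge u$ on a neighborhood of $\partial U'$ and $p$ small — this is where the cone $\Px$ of continuous potentials and the richness of the elliptic structure enter; (iii) form $v:=\min(u,p)$ on $U'$ and $v:=p$ on $Y\setminus U'$: the minimum of two superharmonic functions is superharmonic, and near $\partial U'$ the two definitions agree because $p\le u$ fails... — more carefully, one arranges $p\ge u$ near $\partial U'$ so that $v=u$ there, making $v$ well-defined and superharmonic on all of $Y$, continuous, and equal to $u$ on $K$ up to the error controlled by how small one can make the modification region; (iv) finally, estimate $|u-v|$ on $K$: since $v=u$ on $U'\supset K$ when the construction is done exactly, one in fact gets $v=u$ on $K$, so any $\ve>0$ works, or if only approximate gluing is available one shrinks the transition region to push the error below $\ve$.

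The main obstacle I anticipate is step (ii)–(iii): constructing the global superharmonic majorant $p$ that dominates $u$ near $\partial U'$ while staying controlled, and ensuring the glued function $v$ is genuinely superharmonic across the interface $\partial U'$ rather than merely lower semicontinuous. Ellipticity is exactly the hypothesis that should make this possible — on a non-elliptic space (e.g.\ for the heat equation) such a two-sided gluing can fail, which is precisely why (AP) is imposed separately there. I would handle the interface by the standard device: if $p>u$ on a full neighborhood of $\partial U'$ (strict inequality, achievable by adding a small strictly superharmonic term using ellipticity), then $\min(u,p)$ coincides with $u$ on an open set containing $\partial U'$ on the $U'$ side and equals $p$ on the outside, so the two local definitions patch to a function that is superharmonic on $Y$ by the sheaf property and the local minimum principle. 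Once the interface issue is resolved, continuity of $v$ and the estimate $|u-v|\le\ve$ on $K$ are routine.

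Alternatively, and perhaps more cleanly, one can invoke the known characterization of ellipticity via the approximation of superharmonic functions: on an elliptic $\mathcal P$-harmonic Bauer space, every $u\in\S(U)\cap\C(U)$ is, on each compact subset of $U$, the uniform limit of functions of the form $p-q$ with $p,q$ continuous potentials on $Y$ — and such differences lie in $\S(Y)\cap\C(Y)$ up to adding a potential to make them superharmonic. If this approximation result is available from the structure theory of elliptic spaces (it is closely related to the fact that potentials separate points and the absence of ``absorbing'' phenomena under ellipticity), then (AP) follows immediately with $U$ any bounded neighborhood of $K$, and the proof reduces to citing that result and checking that the approximants can be taken in $\S(Y)\cap\C(Y)$.
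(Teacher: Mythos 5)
There is a genuine gap, located exactly where you flag the ``main obstacle''. The gluing in steps (ii)--(iii) is inconsistent at the interface: if $p\ge u$ on a neighborhood of $\partial U'$, then $\min(u,p)=u$ there, while your outer definition is $v=p$, so the two definitions do not agree and $v$ is not well defined (let alone superharmonic) across $\partial U'$. A correct patch needs $p\le u$ on an annular region just inside $\partial U'$ (so that $\min(u,p)=p$ there, matching the outside) together with $p\ge u$ on $K$ (so that $v=u$ on $K$). Producing a single $p\in\S(Y)\cap\C(Y)$ that dominates $u$ on $K$ yet is dominated by $u$ on a surrounding annulus is precisely the hard content of Runge-type superharmonic approximation; it is not delivered by ellipticity as a black box, and it can fail when $U$ has ``holes'' (bounded components of $U^c$). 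Your sketch never constructs $p$, so the argument does not close.

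Relatedly, you never exploit the topological normalization that is the actual substance of the paper's proof. The paper takes a compact neighborhood $L$ of $\hat K$ and lets $U$ be the interior of $\hat L$; a short argument in the one-point compactification shows that every connected component of $U^c$ is then unbounded. Under this hypothesis the approximation theorem of Bliedtner--Hansen (\emph{Simplicial cones in potential theory II}, Theorem 6.1 and Remark 6.2.1) yields, for $u\in\S(U)\cap\C(U)$ and $\ve>0$, a function $v\in\S(Y)\cap\C(Y)$ with $|u-v|<\ve$ on $\hat K\supset K$. Your ``alternative'' paragraph gestures at such a theorem but asserts it for \emph{any} bounded neighborhood $U$ of $K$, which is stronger than what is true and skips the one step that actually has to be checked; (AP) only requires the existence of one good $U$, and choosing it as the interior of a filled compact is the whole point. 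Either carry out the gluing with a correctly oriented interface and an honest construction of $p$ (which will force the no-holes condition on $U$ to reappear), or cite the Bliedtner--Hansen theorem and verify its topological hypothesis as the paper does.
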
 
 
\begin{proof} Let $K$ be a compact set in $Y$, let $L$ be a compact neighborhood of $\hat K$, and let $U$
be the interior of the compact set $\hat L$. 
Since $Y\setminus \hat L$ is the union of all unbounded connected components
of $L^c$, the set $(Y\setminus \hat L)\cup \{\infty\}$ is connected in the Aleksandrov compactification
$Y_\infty:=Y\cup \{\infty\}$, and hence its closure, that is, the set $(Y\setminus U)\cup \{\infty\}$, is connected
as well. Therefore every connected component of $U^c$ is unbounded.

By \cite[Theorem 6.1 and Remark 6.2.1]{BH-simplicial-II}, we obtain that, given $u\in \S(U)\cap \C(U)$ and $\ve>0$,
there exists $v\in \S(Y)\cap \C(Y)$ such that $|u-v|<\ve$ on $\hat K$ (cf.\ also \hbox{\cite[Theorem 6.9]{gardiner-app}} for
the classical case and \cite[Theorem~1]{{gardiner-gowri}} 
for the case of a  Brelot space satisfying the axiom of domination).
\end{proof} 

Moreover, by \cite[Theorem 6.1]{BH-simplicial-II}, it is clear that (AP) holds for many open sets $Y$
in $\real^d\times \real$, $d\ge 1$, with respect to the heat equation. For example, (AP) holds
if $Y$~is a~convex open set in $\reald\times \real$.

By Theorem \ref{main}, we now obtain the following.

\begin{corollary}\label{ap-corollary}
Suppose that $Y$ is elliptic or, more generally, that {\rm(AP)} holds.   
Then, for every $x\in Y$,  $J_x(Y)$ is  the union of all~$J_x(U)$, $U\subset \subset Y$,
and $(\ref{ex-main})$, $(\ref{fopen})$ hold.
\end{corollary}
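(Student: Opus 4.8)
The plan is to reduce the whole statement to the hypothesis of Theorem~\ref{main}. If $Y$ is elliptic, then (AP) holds by Proposition~\ref{ell-ap}, so it suffices to treat the case where (AP) holds. In that case I would fix $x\in Y$ and prove that $J_x(Y)=\bigcup_{x\in U\subset\subset Y}J_x(U)$; the formulas (\ref{ex-main}) and (\ref{fopen}) then follow immediately from Theorem~\ref{main}. By (\ref{mju}) the inclusion ``$\supset$'' is automatic, so the only thing to establish is that an arbitrary $\mu\in J_x(Y)$ lies in some $J_x(U)$ with $x\in U\subset\subset Y$.

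So let $\mu\in J_x(Y)$, i.e.\ $\mu\in M_x(\S(Y))$ with $\supp\mu\subset\subset Y$. I would put $K:=\supp\mu\cup\{x\}$, a compact subset of $Y$, and apply (AP) to obtain a bounded open neighborhood $U$ of $K$ with the approximation property in (AP). The claim is that $\mu\in J_x(U)$. Here $x\in K\subset U\subset\subset Y$ and $\supp\mu\subset K\subset\subset U$, so it only remains to verify that $\mu\in M_x(\S(U))$; and since every $u\in\S(U)$ is an increasing limit of functions in $\S(U)\cap\C(U)$, it is enough to check that $\mu(u)\le u(x)$ for every $u\in\S(U)\cap\C(U)$ (for such $u$ the condition $\mu(u^-)<\infty$ is trivial, $u$ being continuous and $\mu$ compactly supported; note also that $\mu$, being a Radon measure supported by the compact set $K$, is finite).

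Then I would fix $u\in\S(U)\cap\C(U)$ and $\ve>0$. By (AP) there is $v\in\S(Y)\cap\C(Y)$ with $|u-v|<\ve$ on $K$. Since $v\in\S(Y)$ and $\mu(v^-)<\infty$, we have $\mu(v)\le v(x)$ because $\mu\in M_x(\S(Y))$; moreover $v(x)\le u(x)+\ve$ since $x\in K$. As $u\le v+\ve$ on $K\supset\supp\mu$, this yields
\[
\mu(u)\le\mu(v)+\ve\,\mu(K)\le v(x)+\ve\,\mu(K)\le u(x)+\ve\bigl(1+\mu(K)\bigr),
\]
and letting $\ve\to0$ gives $\mu(u)\le u(x)$. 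Hence $\mu\in J_x(U)$, which proves $J_x(Y)=\bigcup_{x\in U\subset\subset Y}J_x(U)$, and the remaining assertions follow from Theorem~\ref{main}.

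The argument is essentially routine: the one genuine input is the existence of the global approximant $v$, which is already isolated in Proposition~\ref{ell-ap} (and in the heat-equation discussion following it). Within the corollary itself the only point that needs a little care is that the error term $\ve\,\mu(K)$ stays bounded when $\ve\to0$, and this is exactly where the compactness of $\supp\mu$ (hence the finiteness of $\mu$) and the inclusion $x\in K$ are used.
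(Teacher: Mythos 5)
Your proposal is correct and follows essentially the same route as the paper: take $K=\{x\}\cup\supp\mu$, choose $U$ via (AP), and use the global approximant $v$ to get $\mu(u)\le u(x)+(\mu(K)+1)\ve$ for $u\in\S(U)\cap\C(U)$, then invoke Theorem~\ref{main}. The extra details you supply (finiteness of $\mu$, reduction to continuous superharmonic functions) are exactly the points the paper relies on implicitly.
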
 

\begin{proof} Let $x\in Y$, $\mu\in J_x(Y)$, and $K:=\{x\}\cup \supp \mu$. We choose an open set $U\subset\subset Y$ according to (AP). 
 Let $u\in \S(U)\cap \C(U)$ and $\ve>0$. By (AP), there exists $v\in \S(Y)\cap \C(Y)$ such that $|u-v|<\ve$ on $K$, and therefore
 $$
            \mu(u)\le \mu(v)+ \mu(K)\ve \le v(x)+ \mu(K)\ve\le u(x)+(\mu(K)+1)\ve.
 $$
 Hence $\mu(u)\le u(x)$, $\mu\in J_x(U)$, and the proof is finished by Theorem \ref{main}.
\end{proof}

For the heat equation on an open set $Y$ in $\real^{d+1}$, the union 
$\bigcup_{U\subset \subset Y} J_x(U)$ may be a proper subset of $J_x(Y)$.

\begin{example}
Let $Y:=\{(y',t)\in \real^2\colon t<1+\cos \pi y'\}$ be equipped with the sheaf of solutions to
the heat equation $\partial^2 u/\partial (y')^2=\partial u/\partial t$ and let
 $$
V:=(-2,2)\times (-1,0),  \quad x:=(0,0), \quad \mu:={}^Y\!\ve_x^\vc.
$$
Then 
$$
     \mu\in J_x(Y)\setminus \bigcup_{U\subset\subset Y} J_x(U).
$$
\end{example}

\begin{proof} Indeed,  $\supp \mu$ is the compact set $\partial V\setminus\bigl( (-2,2)\times \{0\}\bigr) $ in $Y$ and $\mu(1)=1$.
For every $n\in\nat$, let 
$$
        V_n:=\{(y',t)\in V\colon t<-1/n\} \und x_n:=(0,-2/n).
$$
Then, for every $u\in \S(Y)\cap \C(Y)$,
$$
            \mu(u)=
\limn \mu_{x_n}^{V_n}(u)\le\limn u(x_n)=u(x).
$$
Therefore $\mu\in J_x(Y)$.
 Now let $U$ be an open set such that $x\in U\subset \subset Y$ and $\supp \mu\subset U$. 
There exists $\eta>0$ such that the line segment $\{1\}\times (-\eta,0)$ does not intersect $U$. For $y=(y',t)\in U$,
let $u(y):=0$, if $t\le -\eta$ or $y'<1$, and $u(y):=t+\eta$, if $t>-\eta$ and $y'>1$.
Then $u\in \S(U)$ and  $\mu(u)>0=u(x)$. Hence $\mu\notin J_x(U)$.
\end{proof}

For a second  application of Theorem \ref{main}, which yields an even stronger result,
we introduce the following transience property: 
\begin{description}
\item[(GTP) ] There is a strictly positive harmonic function $h_0$ on $Y$  such that,
for every compact~$K$ in~$Y$, the set                    $\{R_{h_0}^K=h_0\} $ is compact.
\end{description}
We say that (TP) holds if (GTP) is satisfied with $h_0=1$.

To get a first feeling for (GTP) let us assume for the moment that $h_0>0$ is a harmonic function on $Y$.
Then (GTP) certainly holds if
\begin{equation}\label{ho-infty}
\limsup\nolimits_{y\to\infty}  R_{h_0}^K(y)/h_0(y)<1,
\end{equation} 
which in turn is true,  if there exists a  potential on~$Y$ such that $p>0$ and $p/h_0$
vanishes at infinity. 
In particular, in the classical case,  (TP)  holds,  if $Y$ is a bounded regular set 
in $\reald$, $d\ge 1$ (take any strictly positive continuous potential $p_0$ on an open ball containing the closure
of $Y$ and define $p(y):= p_0(y)- \mu_y^{Y}(p_0)$).
%

Moreover, let $K$ be a compact in $Y$. 
Then the function $R_{h_0}^K$ is harmonic on~$K^c$, and $R_{h_0}^K=h_0$ on~$\hat K$,
by the minimum principle. Since $\hat K$ is compact, we see that the set $\{R_{h_0}^K=h_0\}$ is compact, if  $R_{h_0}^K<h_0$
on every unbounded connected component~$W$ of~$K^c$. In particular, (GTP) holds, if $Y$ is elliptic and if, for every compact~$K$ in~$Y$, 
there is only one unbounded connected component $W$ of~$K^c$. Indeed, otherwise $R_{h_0}^K=h_0$ on $W$, and hence $R_{h_0}^K=h_0$
on~$Y$, which is impossible, since $R_{h_0}^K$ is bounded by a potential.

\begin{theorem}\label{main-ho}
Suppose that {\rm(GTP)} holds {\rm (}with $h_0${\rm)}. Then, for every $x\in Y$,
\begin{equation}\label{mxk}
         \bigcup_{U\subset \subset Y}J_x(U)=J_x(Y)=\{\mu\in M_x(\Px)\colon \supp \mu\subset\subset Y,\ \mu(h_0)=h_0(x)\}.%
\end{equation} 
Moreover, {\rm (\ref{ex-main})} and {\rm (\ref{fopen})} hold.
\end{theorem}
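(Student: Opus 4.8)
The plan is to prove the theorem by establishing the two inclusions in the characterization of $J_x(Y)$ and then invoking Theorem~\ref{main} for the rest. The key structural fact supplied by (GTP) is that for a compact $K\ni x$, the set $\{R_{h_0}^K=h_0\}$ is compact; since this set contains $\hat K$, it is a compact subset of $Y$ that ``absorbs'' the balayage of $h_0$ onto $K$. I would first set up notation: fix $x\in Y$, $\mu\in J_x(Y)$, and let $K:=\{x\}\cup\supp\mu$, which is compact in $Y$. Since $\mu\in J_x(Y)$ and $h_0\in\S(Y)$ (being harmonic, a~fortiori superharmonic), we have $\mu(h_0)\le h_0(x)$; I must show equality holds, and that $\mu\in M_x(\Px)$ — the latter is immediate from (\ref{mju}). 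So the first real task is the inequality $\mu(h_0)\ge h_0(x)$.

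For that inequality, the idea is that $R_{h_0}^K$ is harmonic off $K$, equals $h_0$ on $K$ (in particular $R_{h_0}^K(x)=h_0(x)$ since $x\in K$), and is dominated by $h_0$ everywhere. Because $\supp\mu\subset K$ and $R_{h_0}^K=h_0$ on $K$, we get $\mu(R_{h_0}^K)=\mu(h_0)$. Now the point is to choose a bounded open neighborhood $U$ of the compact set $L:=\{R_{h_0}^K=h_0\}$ with $\ov U\subset Y$; on $U$ the function $R_{h_0}^K$ is superharmonic and continuous (approximating from below if necessary), so applying the harmonic measure $\mu_x^U=\vx^\uc$ — note $x\in L\subset U$ — gives $\mu_x^U(R_{h_0}^K)\le R_{h_0}^K(x)=h_0(x)$. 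But on $\uc$ we have $R_{h_0}^K<h_0$ strictly (that is the force of $L$ being precisely $\{R_{h_0}^K=h_0\}$ and $U\supset L$), which combined with the balayage description $\mu_x^U(h_0)\le h_0(x)$ and a careful comparison should pin down $\mu(h_0)=h_0(x)$. More cleanly: exhaust $Y$ by open sets $U_n\subset\subset Y$ with $L\subset U_n$, set $x_n$ suitably or just use $\mu_x^{U_n}\to\vx^{L}$ vaguely; since $R_{h_0}^K=h_0$ on $L$ and $R_{h_0}^K$ is harmonic on $L^c\cap U_n$, one gets $\mu(R_{h_0}^K)=\vx^{L}(R_{h_0}^K)=h_0(x)$, hence $\mu(h_0)=h_0(x)$. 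I anticipate this limiting/balayage bookkeeping with $R_{h_0}^K$ to be the main obstacle — one must be careful that $R_{h_0}^K$ need not be continuous, so one works with $\S(U_n)\cap\C(U_n)$ approximants and uses that $\mu\in J_x(Y)$ tests against all of $\S(Y)\cap\C(Y)$.

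Conversely, suppose $\mu\in M_x(\Px)$ with $\supp\mu\subset\subset Y$ and $\mu(h_0)=h_0(x)$; I must produce, for this $\mu$, an open $U\subset\subset Y$ with $\mu\in J_x(U)$, which by (\ref{mju}) gives $\mu\in\bigcup_U J_x(U)$ and in particular $\mu\in J_x(Y)$. Take $K:=\{x\}\cup\supp\mu$ and $L:=\{R_{h_0}^K=h_0\}$, choose open $U$ with $L\subset U\subset\subset Y$. The claim is $\mu(u)\le u(x)$ for every $u\in\S(U)\cap\C(U)$. The mechanism: given such $u$ and $\ve>0$, since $h_0>0$ is continuous and bounded below by a positive constant on the compact $\ov U$, the function $u+\ve h_0$ is superharmonic on $U$, and one wants to extend/modify it to a global object. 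Here is where (GTP) re-enters: because $R_{h_0}^K<h_0$ off $L$, one can use $h_0-R_{h_0}^K$ as a strictly positive potential-like superharmonic function on $K^c$ vanishing appropriately, allowing the classical ``pasting'' of the local superharmonic $u$ on $U$ with a multiple of a potential outside, producing $w\in\S(Y)$ with $w\ge u$ near $K$ and $w(x)\le u(x)+C\ve$; then $\mu(u)\le\mu(w)\le w(x)\le u(x)+C\ve$. Since $h_0(x)=\mu(h_0)$ must be used to control the correction term — this is exactly where the hypothesis $\mu(h_0)=h_0(x)$ is consumed — I expect the construction of $w$ and the accounting of the error in terms of $\mu(h_0)-h_0(x)=0$ to be the delicate point of this direction.

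Finally, once $\bigcup_{U\subset\subset Y}J_x(U)=J_x(Y)$ is established, the hypothesis of Theorem~\ref{main} is satisfied for this $x$, so (\ref{ex-main}) and (\ref{fopen}) follow immediately. I would close the proof by remarking that the strict positivity and continuity of $h_0$ on relatively compact sets, together with the minimum principle giving $R_{h_0}^K=h_0$ on $\hat K$, are the only features of (GTP) used beyond compactness of $\{R_{h_0}^K=h_0\}$, and that the case $h_0=1$ (i.e.\ (TP)) recovers the cleanest statement $\mu(1)=1$.
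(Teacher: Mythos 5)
Your reduction to Theorem~\ref{main} at the end is exactly right, and your easy inclusion $J_x(Y)\subset\{\mu\in M_x(\Px)\colon \supp\mu\subset\subset Y,\ \mu(h_0)=h_0(x)\}$ is correct --- though you make it far harder than it is: since $h_0$ is harmonic, both $h_0$ and $-h_0$ lie in $\S(Y)$, so $\mu(h_0)\le h_0(x)$ and $\mu(-h_0)\le -h_0(x)$ give equality at once. The whole exhaustion/balayage discussion culminating in $\mu(R_{h_0}^K)=\vx^{L}(R_{h_0}^K)$ is both unjustified (there is no reason an arbitrary Jensen measure should integrate $R_{h_0}^K$ the same way $\vx^{L}$ does) and unnecessary.

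The genuine gap is in the converse, which is the whole content of the theorem. Your chain $\mu(u)\le\mu(w)\le w(x)\le u(x)+C\ve$ cannot work as structured. First, the middle inequality $\mu(w)\le w(x)$ is only available for $w$ that can be tested against $\mu\in M_x(\Px)$, i.e.\ for $w\in\Px$ or $w\in\S^+(Y)$; for a general $w\in\S(Y)$ that inequality is precisely the Jensen property you are trying to prove, so the argument is circular. Second, if you do take $w\in\S^+(Y)$, then $w\ge 0$, and no such $w$ can satisfy $w\ge u$ on $K$ together with $w(x)\le u(x)+C\ve$ when, say, $u\equiv -1$. Third, and decisively: the hypothesis $\mu(h_0)=h_0(x)$ never enters your displayed inequalities (they use only $w\ge u$ on $\supp\mu$ and the value $w(x)$), so if the construction existed it would show that \emph{every} compactly supported $\mu\in M_x(\Px)$ is a Jensen measure --- which is false even on a bounded regular set in $\realt$ with $h_0=1$: take $\mu:=\tfrac12\mu_x^V$ and $u\equiv-1$. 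To make the hypothesis do any work you would need bounds of the form $w\ge u+Mh_0$ on $K$ and $w(x)\le u(x)+Mh_0(x)+\ve$ with $w\in\S^+(Y)$, and you give no construction of such a $w$; (GTP) is a transience condition, not an approximation property, and does not obviously yield one. The paper avoids this entirely: it observes that $N:=\{\mu\in M_x(\Px)\colon\supp\mu\subset K,\ \mu(h_0)=h_0(x)\}$ is a closed \emph{face} of the compact convex set $M_x(\Px)$, so it suffices to treat its extreme points, which by (\ref{ext-moko}) are of the form $\mu=\vx^F$; for these it sets $V:=W\setminus F$, writes $\vx^{V^c}=\sigma+\tau$ with $\sigma:=1_F\vx^{V^c}$ and $\tau:=1_{W^c}\vx^{V^c}$, and uses the strict inequality $R_{h_0}^K<h_0$ on $W^c$ together with $\mu(h_0)=h_0(x)$ to force $\tau=0$, whence $\mu=\vx^{V^c}\in M_x(S(U))$. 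You need either that extreme-point argument or a genuinely new construction to close the gap.
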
 

\begin{proof} We  fix 
a point $x\in Y$,  a compact~$K$ in~$Y$, and define
$$
        \nxk:= \{\mu\in \M_x(\Px)\colon \supp \mu\subset K,\  \mu(h_0)= h_0(x)\}.
$$
By assumption, there exists a bounded  open neighborhood $W$ of $K\cup \{x\}$
such that $            R_{h_0}^K<h_0$ on $\wc$.
Let $U$ be a bounded open neighborhood of $\ov W$. If we show  that
\begin{equation}\label{nxw}
\nxk\subset M_x(S(U)),
\end{equation} 
then we obtain, by (\ref{mju}), that (\ref{mxk}) holds, and the proof is finished by Theorem \ref{main}.

To prove (\ref{nxw}) we observe that  $\nxk$ is a closed face of $M_x(\Px)$. Hence
it suffices to show that
\begin{equation}\label{extnx}
    \nxk\cap \ext M_x(\Px)\subset M_x(S(U)). 
\end{equation} 
So let us fix $\mu\in \ext M_x(\Px)$, $\mu\ne \vx$, such that $\supp \mu\subset K$ and $\mu( h_0)=h_0(x)$.
By (\ref{ext-moko}), there is a~finely closed $G_\delta$-set $F$ such that $x\notin F$ and $\mu=\vx^F$.
 By \cite[VI.9.4]{BH}, 
$$
          \vx^{F\cap K}=\vx^F|_K+(\vx^F|_{K^c})^{F\cap K}=\vx^F.
$$
So we may assume  that $F\subset K$.
The set $           V:=W\setminus F$ is a~finely open $K_\sigma$-set, $x\in V$, and $\ov V\subset U$. We define
 $\sigma:=1_F\vx^\vc$, $\tau:=1_{\wc}\vx^\vc$, and  note that
$$
                  \vx^\vc=\sigma+\tau \und \mu=\vx^F=\sigma+\tau^F,
$$
where the last equality follows by  \cite[VI.9.4]{BH}.  If $\tau\ne 0$, then

$$
            \tau^F(h_0)=\tau( R_{h_0}^F)\le \tau( R_{h_0}^K)<\tau(h_0),
$$
and therefore
$$
          h_0(x)=\mu(h_0) =(\sigma+\tau^F)(h_0)< (\sigma+\tau)(h_0)=\vx^\vc(h_0),
$$
a contradiction. Thus $\tau=0$ and $\mu=\sigma=\vx^\vc\in \M_x(S(U))$.
\end{proof} 

Given $y, z\in \reald$, let  
$$
u_z(y):=\begin{cases} 
\ln 1/|y-z|,&\mbox{if }d\ge 2,\\
|y-z|^{2-d},&\mbox{if }d\ge 3.
\end{cases}
$$
For classical potential theory, we have the following strong converse
to Theorem \ref{main-ho}.

\begin{theorem}\label{class-infty}
Let  $Y$ be a Greenian open set in $\reald$, $d\ge 1$. Then {\rm(TP)} holds, if
\begin{equation}\label{jxm}
J_x(Y)=\{\mu\in M_x(\Px)\colon \supp\mu\subset \subset Y, \ \mu(1)=1\} \qquad (x\in Y).
\end{equation} 
\end{theorem}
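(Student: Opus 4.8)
The plan is to prove the contrapositive: if (TP) fails, then the equality (\ref{jxm}) must fail, i.e.\ there is a point $x\in Y$ and a measure $\mu\in M_x(\Px)$ with $\supp\mu\subset\subset Y$ and $\mu(1)=1$ which is \emph{not} a Jensen measure for $x$. By the discussion preceding Theorem~\ref{main-ho}, the failure of (TP) (with $h_0=1$) means that there is a compact $K$ in $Y$ such that $\{R_1^K=1\}$ is not compact; since $R_1^K=1$ on $\hat K$ and $R_1^K$ is harmonic on $K^c$, the set $\{R_1^K=1\}$ fails to be compact precisely when there is an \emph{unbounded} connected component $W$ of $K^c$ on which $R_1^K\equiv 1$. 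In the classical (Greenian) setting this forces $Y$ to be ``thin at infinity along $W$'' in a strong sense: the harmonic measure of $K$ (as seen from $W$) is full, so that, for a suitable $x\in W$, the reduced measure $\varepsilon_x^{K}$ restricted via $W$ is a probability measure supported on a compact subset of $Y$.

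Concretely, I would fix such a $W$ and pick $x\in W$. Consider the harmonic measure $\mu:={}^Y\!\varepsilon_x^{K^c\cap\ov W}$ or, more cleanly, the balayage $\varepsilon_x^{A}$ where $A:=\partial W\cap Y$ (a compact subset of $Y$ since $W$ is a component of the complement of the compact $K$). The key point coming from $R_1^K\equiv 1$ on $W$ is that this balayage measure has total mass $1$: $\varepsilon_x^A(1)=R_1^A(x)\ge R_1^K(x)=1$, and $\le 1$ trivially, so $\mu(1)=1$. Also $\supp\mu\subset A\subset\subset Y$ and $\mu(p)\le p(x)$ for every potential $p$, so $\mu\in M_x(\Px)$ with $\mu(1)=1$ — it lies in the right-hand side of (\ref{jxm}). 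It remains to exhibit a superharmonic $u$ on $Y$ with $\mu(u)>u(x)$, witnessing $\mu\notin J_x(Y)$. For this I would use that $W$ is unbounded: choose $z\in\reald$ with $|z|$ large (for $d\ge 2$ take $z$ ``beyond'' $A$ along the unbounded direction of $W$, for $d=1$ argue directly with an affine function), and let $u:=u_z$ restricted to $Y$ (a potential, hence superharmonic, on $Y$ — after truncation if needed so that it is bounded on $\ov W$, or simply use that $Y$ is Greenian so $G_Y(\cdot,z)$ is available when $z\in Y$, and a linear function otherwise). Because $\mu$ is carried by $A=\partial W\cap Y$, which is ``close to $K$'' and thus far from the unbounded end of $W$, while $x$ can be taken deep inside $W$ near that end, one arranges $u_z(x)<\mu(u_z)$.

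The cleanest route is probably this: since $R_1^K=1$ on the unbounded component $W$, the process started at $x\in W$ hits $K$ (equivalently $\ov W^c$) before leaving $Y$, with probability $1$; therefore the exit measure $\mu$ of $W$ from $x$ is a probability measure on $\partial W\cap Y\subset\subset Y$. Now $Y$ being Greenian means it carries a nonconstant positive superharmonic function, and more to the point, since $W$ is unbounded one can find a positive superharmonic function $u$ on $Y$ with $u(x_j)\to 0$ along a sequence $x_j\to\infty$ in $W$ while $\inf_A u=:c>0$; by the super-mean-value inequality applied with the exit measure of $W$, $\mu(u)\ge c>u(x)$ once $x$ is taken far enough out in $W$. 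Thus $\mu\notin M_x(\S(Y))$, so (\ref{jxm}) fails.

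The main obstacle — and the step that needs the most care — is producing the superharmonic function $u$ on all of $Y$ that separates $x$ from $\mu$, together with checking that the exit measure of the unbounded component $W$ is genuinely a full probability measure supported compactly in $Y$. The first difficulty is handled by the specific geometry of $\reald$ and the explicit kernels $u_z$ (using that Greenian sets in $\reald$, $d\ge 2$, have a Green function vanishing at infinity, and using an affine function for $d=1$), possibly combined with a truncation to stay continuous and bounded near $\ov W$. The second is exactly the content of $R_1^K\equiv 1$ on $W$ translated into probabilistic language, i.e.\ that $\hat W:=W$ together with its (bounded) complementary components behaves like the whole ``transient-less'' region. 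I expect the bulk of the author's proof to consist of making this exit-measure argument precise and verifying $\mu(1)=1$ and $\supp\mu\subset\subset Y$ rigorously.
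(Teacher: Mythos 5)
Your overall strategy coincides with the paper's: argue the contrapositive, take as candidate counterexample the swept (exit) measure $\mu=\ve_x^K$ of a point $x$ in an unbounded component $W$ of $K^c$ on which $R_1^K\equiv 1$ (so that $\mu(1)=1$, $\supp\mu\subset K\subset\subset Y$, $\mu\in M_x(\Px)$), and then separate $\mu$ from $J_x(Y)$ by a superharmonic function exploiting the unboundedness of $W$. That part of your verification is correct. The genuine gap is in the separating step. Your ``cleanest route'' asks for a \emph{positive} superharmonic $u$ on $Y$ with $u(x_j)\to 0$ along the unbounded end of $W$ while $\inf_K u=c>0$, so that $\mu(u)\ge c>u(x)$. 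No such $u$ exists, and no positive superharmonic function can do this job: if $u>0$ is superharmonic and $u\ge c$ on $K$, then $u/c\ge R_1^K$, hence $u(x_j)\ge c\,R_1^K(x_j)=c$ at every point where $R_1^K=1$; equivalently, the balayage measure satisfies $\ve_x^K(v)=\hat R_v^K(x)\le v(x)$ for \emph{every} positive superharmonic $v$. The same objection disposes of the variant $u=G_Y(\cdot,z)$: that is a potential, so $\mu(u)\le u(x)$ already holds because $\mu\in M_x(\Px)$. The separating superharmonic function must be unbounded below.

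The paper's proof supplies exactly this, via a case distinction you do not make. Since the $x_n$ with $R_1^K(x_n)=1$ leave every compact subset of $Y$, either $|x_n|\to\infty$ in $\reald$ (this can occur only for $d=2$, where one takes any $z\in\reald\setminus Y$ and uses $u_z=\ln(1/|\cdot-z|)$, harmonic on $Y$ and tending to $-\infty$, so $u_z(x_n)<\min_K u_z\le\mu_n(u_z)$ for large $n$), or $(x_n)$ accumulates at some $z\in\partial Y$, in which case one uses $-u_z$, which is harmonic on $Y$ and tends to $-\infty$ at $z$. Your proposal addresses only the first regime (``take $z$ with $|z|$ large''), whereas the second is the only one that can occur for $d\ge 3$ and is typical for planar examples such as punctured domains; there your construction produces nothing. (The $d=1$ case, which the paper handles with the affine function $z\mapsto -z$ on half-lines, you mention only in passing, but that part is easily repaired.)
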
 

\begin{proof} We may assume without loss of generality that $Y$ is connected.

If $d=1$, then $Y$ is of the form $(a,b)$, $(a,\infty)$, or $(-\infty,a)$. In the first case,
(TP) holds, since $Y$ is a bounded regular set. In the second case,   every  $p\in\Px$
is increasing, whereas the function $z\mapsto -z$ is harmonic on $Y$, and therefore $\vy\in M_x(\Px)\setminus J_x(Y)$,
whenever $a<y<x<\infty$. Similarly, if $Y=(-\infty, a)$.

So let $d\ge 2$ and  suppose that  (TP) does not hold.  Then $Y\ne \reald$, and there exist a~compact set $K$ in $Y$
and $x_n\in Y\setminus K$ such that $R_1^K(x_n)=1$, for every $n\in\nat$, and the sequence $(x_n)$
is unbounded in $Y$.  So, for every $n\in\nat$, 
$$
\mu_n:=\ve_{x_n}^K\in M_{x_n}(\Px), \quad \supp \mu_n\subset K,\quad \mu_n(1)=1.
$$
Let us  assume first that $\sup_{n\in\nat} |x_n|=\infty$.  Fixing any $z\in\reald\setminus Y$, we may choose $n\in\nat$
such that $u_z(x_n)<u_z$ on $K$. Then  $u_z(x_n)<\mu_n(u_z)$, and hence  $\mu_n\notin J_{x_n}(Y)$.
Next, we suppose that $\sup_{n\in\nat} |x_n|<\infty $. Then $(x_n)$ has a limit point $z\in\partial Y$, and we may 
choose $n\in\nat$ such that  $u_z(x_n)>u_z$ on $K$. Now  $-u_z(x_n)<\mu_n(-u_z)$, and again  $\mu_n\notin J_{x_n}(Y)$.

Thus in both cases (\ref{jxm}) does not hold. This finishes the proof.
\end{proof} 

We introduce the following weak regularity property for open sets $Y$ in $\reald$, $d\ge 1$.
\begin{description}
\item[(WRP)] Every  connected component of $\partial Y$ contains a point in  the closure of~$Y_r$.
\end{description}
Of course, (WRP) trivially holds, if $d=1$. Suppose for a moment that  $d=2$. Then, for every irregular point $z\in\partial Y$ and every 
$\ve>0$, there exists $\delta\in (0,\ve)$, such that the boundary of $B(z,\delta)$ does not intersect $\partial Y$ (see
 \cite[Theorem 7.3.9]{gardiner-armitage}), and hence the singleton $\{z\}$ is the connected component of $\partial Y$ containing $z$.
So (WRP) holds if and only if  $Y_r$ is dense in~$\partial Y$.

Moreover, the following example may be instructive. Let $Y_0$ be the unit ball in~$\reald$, $d\ge 3$, and  let  $I$ be a closed line segment,
$0\in I\subset Y_0$. Choosing pairwise disjoint closed balls $B_n$ in $B(0,1/n)\setminus I$, $n\in\nat$,
such that their union~$T$ is thin at~$0$, we define  $Y:=Y_0\setminus (I\cup T)$. Then $\partial Y\setminus Y_r=I$
and (WRP) holds.

\begin{proposition}\label{regular}
Let $Y$ be an open set in $\reald$, $d\ge 2$, bounded if $d=2$, such that {\rm(WRP)} is satisfied.
Then {\rm(TP)} holds.
\end{proposition}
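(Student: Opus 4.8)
The plan is to verify the sufficient condition (\ref{ho-infty}) for (TP), namely that $\limsup_{y\to\infty}R_1^K(y)<1$ for every compact $K\subset Y$; since $1$ is harmonic on $Y$, this gives (GTP) with $h_0=1$, i.e.\ (TP). Equivalently, I would show that $R_1^K<1$ on every unbounded connected component $W$ of $K^c$, together with the fact that $R_1^K=1$ on the compact set $\hat K$ and is harmonic on $K^c$; then $\{R_1^K=1\}$ is closed in $Y$ and contained in $\hat K\cup(\partial Y\text{-side})$, and one argues it is in fact compact. The crucial point is the behaviour of $R_1^K$ near $\partial Y$: if $Y=\reald$ (possible only for $d\ge 3$) then $R_1^K$ is a potential tending to $0$ at infinity, so we are done directly; otherwise we must use (WRP) to see that $R_1^K$ cannot be identically $1$ on an unbounded component of $K^c$.

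The key steps, in order, are as follows. First, reduce to $Y$ connected and fix a compact $K\subset Y$; enlarging $K$ to $\hat K$ we may assume $K^c$ has no bounded components, so every connected component $W$ of $K^c$ is unbounded (here "unbounded" means not relatively compact in $Y$). Second, suppose for contradiction that $R_1^K=1$ on some such $W$. Then the harmonic function $R_1^K$ equals $1$ on $W$, hence (being dominated by a Green potential when $Y$ is Greenian, or tending to $0$ at infinity when $Y=\reald$) this already forces a contradiction unless $W$ "reaches" $\partial Y$; more precisely, $R_1^K$, as the regularised reduced function, tends to $0$ at every regular boundary point of $W$ lying in $\partial Y$, and quasi-everywhere on $\partial W\cap\partial Y$. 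Third, invoke (WRP): the (topological) boundary $\partial W$ meets $\partial Y$, and the relevant connected component of $\partial Y$ contains a point $z$ in the closure of $Y_r$; I would show that the local regularity of $Y$ near $z$ transfers to regularity of $W$ at a nearby boundary point, so that $R_1^K$ has boundary limit $0$ there. This contradicts $R_1^K\equiv 1$ on $W$. Fourth, conclude $R_1^K<1$ on each unbounded component of $K^c$, hence $\{R_1^K=1\}=\hat K$ is compact, giving (TP).

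I expect the main obstacle to be the third step: relating the regularity of the \emph{ambient} set $Y$ at a point $z\in\overline{Y_r}$ on a component of $\partial Y$ to the regularity of the \emph{component} $W$ of $K^c$ at a suitable boundary point, and ensuring that such a point is actually accessible from $W$ (rather than being screened off by $K$). The connectedness hypothesis on the relevant component of $\partial Y$ is what makes this work: $\partial W\cap\partial Y$, being relatively closed and open in that component once one removes the part of $\partial W$ coming from $\partial K$, must contain the whole component or at least accumulate at a point of $\overline{Y_r}$; a Wiener-type criterion (or the classical fact that regularity is a local property of the complement, as in \cite[Theorem 7.3.9]{gardiner-armitage}) then yields a regular boundary point of $W$ in $\partial Y$ near which $R_1^K\to 0$. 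The dimension split ($d=2$ bounded versus $d\ge 3$) enters only to guarantee that $Y$ is Greenian and that "unbounded component of $K^c$" genuinely forces the boundary behaviour; in the bounded case $d=2$ it is automatic, and for $d\ge3$ the unbounded-in-$\reald$ case is handled by the decay of the Green potential, while the bounded-in-$\reald$ case is handled exactly as for $d=2$.
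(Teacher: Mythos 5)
Your overall skeleton matches the paper's: reduce to showing $R_1^K<1$ on each connected component $W$ of $K^c$ that is unbounded in $Y$, dispose of the components that are unbounded in $\reald$ by the decay of $|\cdot|^{2-d}$ (resp.\ $\ln R/|\cdot|$ after embedding a bounded planar $Y$ in a disc), and attack the remaining components via (WRP) and a connectedness argument on $\partial Y$. But the one step where (WRP) is actually used -- your ``third step'' -- is precisely where your plan has two genuine gaps, and you acknowledge rather than close them. First, you assert that $\partial W\cap\partial Y$ is ``relatively closed and open'' in the relevant component of $\partial Y$ once the part coming from $K$ is removed. That openness is not automatic; in the paper it is established only \emph{under the contradiction hypothesis} that $A:=\partial W\setminus K$ misses $\overline{Y_r}$: then for a small ball $B$ around a point of $A$ with $B\cap(K\cup\overline{Y_r})=\emptyset$, the set $B\cap\partial Y$ consists of irregular points and is therefore polar, so $B\setminus\partial Y$ is connected, hence contained in $W$, which makes $A$ open in $\partial Y$; (WRP) then forces the contradiction. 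Without this polarity/connectedness input your claim that $A$ ``must contain the whole component or at least accumulate at a point of $\overline{Y_r}$'' is unsupported.

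Second, and more seriously, your mechanism for deriving the contradiction is that $R_1^K$ has boundary limit $0$ at a regular boundary point \emph{of $W$}. (WRP) only supplies a point $z\in\partial W\cap\overline{Y_r}$; the genuine regular points of $Y$ near $z$ need not lie on $\partial W$ at all, and a fortiori need not be regular for $W$ or accessible from $W$ -- this is exactly the ``screening'' problem you name but do not resolve, and I do not see how to resolve it along the lines you propose. The paper avoids regularity of $W$ entirely: having found $z\in\partial W\cap\overline{Y_r}$, it takes a ball $B\ni z$ with $\overline B\cap K=\emptyset$, notes that $B\cap Y_r\ne\emptyset$ forces $B\setminus Y$ to be nonpolar, and then for any $y\in B\cap W$ estimates
$\mu_y^{Y\setminus K}(B)\ge\mu_y^{Y\cap B}(B)>0$, whence $R_1^K(y)=\mu_y^{Y\setminus K}(K)\le 1-\mu_y^{Y\setminus K}(B)<1$; ellipticity (Harnack) then gives $R_1^K<1$ throughout $W$. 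This harmonic-measure argument needs only nonpolarity of $B\setminus Y$, not a regular boundary point of $W$, and is the idea your proposal is missing.
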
 

\begin{proof} If $d\ge 3$, let $X:=\reald$ and
$p:=|\cdot|^{2-d}$. If $d=2$, we take $R>0$ such that $\ov Y$ is contained in the open disc $X:=B(0,R)$,
and define $p:= \ln R/|\cdot|$. 

We fix a compact~$K$ in~$Y$ and claim that $R_1^K<1$ outside the compact set $\hat K$. 
So let $W$ be  a connected component of~$Y\setminus K$ which is unbounded in~$Y$.
By ellipticity, it suffices to show that $R_1^K(y) < 1$ for some point $y\in W$.
Clearly, this is true, if $W$~is unbounded in~$X$,  since  $R_1^K$ is bounded by a multiple of $p$.

So let us assume that $W$ is bounded in $X$. Then $\emptyset\ne A:=\partial W \setminus K \subset \partial Y\setminus K$. 
We claim that  $A$ intersects the closure $F$ of $Y_r$. Indeed, suppose that $A\cap F=\emptyset$.
Let $z\in A$ and let $B$ be an open ball in $X\setminus (K\cup F)$ containing $z$.
Then  the  set $B\cap \partial Y$ is polar, and
hence $B\setminus \partial Y$ is connected  (see \cite[Corollary 5.1.5]{gardiner-armitage}). 
This implies that $B\setminus \partial Y\subset W$ and  $B\cap \partial Y\subset A$.
So $A$ is open in $\partial Y$. Let $C$ be a~connected component of $\partial Y$ intersecting $A$.
Then $A\cap C$ is both open and closed in~$C$, and hence $A\cap C=C$. By (WRP), $C\cap F\ne \emptyset$.
Hence $A\cap F\ne \emptyset$, proving our claim.

So there exists a point $z\in \partial W\cap F$. Let $B$ be an open ball,  $z\in B\subset\subset  X\setminus K$.
Then  $B\cap Y_r\ne\emptyset$,
and therefore $B\setminus Y$ is nonpolar. We  fix  $y\in B\cap W$. By~\cite[VI.9.4]{BH} or by the minimum principle,
$$
 \mu_y^{Y\setminus K}(B)\ge \mu_y^{Y\cap B}(B)>0.
$$
 By~\cite[VI.2.9]{BH}, we finally conclude that 
$$
R_1^K(y)={}^Y\!\vy^K(K)=\mu_y^{Y\setminus K}(K)\le 1-\mu_y^{Y\setminus K}(B)<1.
$$
\end{proof}

\begin{remark}\label{remark}{\rm
If $d=2$, the boundedness of $Y$ in Proposition \ref{regular} may not be dropped. Indeed, let
 $Y:=\{y\in\real^2\colon |y|>1\}$.  Then $Y_r=\partial Y$, and hence (WRP) holds.
Choosing $K:=\{y\in \real^2\colon |y|=2\}$,  the set
$\{R_1^K=1\}=\{y\in\realt\colon |y|\ge 2\}$ is not compact (and hence (TP) does not hold). 
Moreover, taking $x:=(3,0)$ and $\mu:={}^Y\!\vx^K$, we have 
$\mu\in M_x(\Px)$, $\supp \mu=K$, and $\mu(1)=1$, but $\mu\notin J_x(Y)$, since $\mu(u_0)>u_0(x)$.
So even (\ref{jxm}) does not hold.
}
\end{remark}

\begin{proposition}\label{converse-plane} 
Let $Y$ be a bounded open set in $\reald$, $d\ge 2$,  such that 
{\rm (WRP)} does not hold. 
Then  {\rm (\ref{jxm})}    does not hold.
\end{proposition}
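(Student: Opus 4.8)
The goal is to contradict (\ref{jxm}) by producing, for some $x\in Y$, a measure $\mu\in M_x(\Px)$ with $\supp\mu\subset\subset Y$ and $\mu(1)=1$ but $\mu\notin J_x(Y)$. Since (WRP) fails, there is a connected component $C$ of $\partial Y$ such that $C\cap\ov{Y_r}=\emptyset$; by the $d=2$ dichotomy recalled in the excerpt, $C$ cannot be a singleton when $d=2$ (a singleton irregular boundary point is its own component), so $C$ consists entirely of irregular points and, in particular, the polar set $\partial Y\cap(\text{small neighborhood of }C)$ makes $Y$ ``fill in'' around $C$. First I would separate $C$ from the rest of the boundary: since $C$ is a connected component of the compact set $\partial Y$, standard point-set topology gives disjoint open sets $G_0\supset C$ and $G_1\supset \partial Y\setminus C$ with $\ov{G_0}$ compact and $\ov{G_0}\cap(\partial Y\setminus C)=\emptyset$.

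The key step is to show that $Y\cup C$ is open near $C$, i.e.\ that $C$ lies in the fine interior--style situation where harmonic measure cannot ``see'' $C$. Concretely, I expect to argue that $\partial Y\cap G_0$ is polar: it is a relatively closed subset of $G_0$ consisting of irregular points, and for open sets in $\reald$ the set of irregular boundary points, while not polar in general, becomes polar once we know (via the $d=2$ argument, or automatically for $d\ge3$ under the hypothesis that its closure misses $Y_r$) that it is contained in a ``thin'' part of the boundary; more carefully, one uses that each point of $C$ has a neighborhood basis of balls whose boundary spheres miss $\partial Y$, forcing $\partial Y\cap G_0$ to be relatively closed and polar by the Kellogg property combined with \cite[Corollary 5.1.5]{gardiner-armitage}. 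Once $\partial Y\cap G_0$ is polar, $G_0\setminus\partial Y$ is connected, hence contained in a single connected component $W$ of $Y$, and $C$ is a subset of the fine closure issues only through a polar set: harmonic measure $\mu_y^{Y}$ for $y\in W$ puts no mass on $C$ and, more to the point, for a small ball $B$ with $\ov B\subset G_0\cap W^{\mathrm{fine}}$ we can compare as in the proof of Proposition~\ref{regular}.

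Then I would mimic the Remark~\ref{remark} construction. Pick a compact $K\subset Y$ whose complement has $W$ (the component meeting $C$) as an unbounded-in-$Y$ component with $R_1^K=1$ on a neighborhood of $C$ inside $W$ — this is exactly where failure of (TP) enters, and it is guaranteed because $C$ being ``invisible'' (polar-surrounded) means harmonic measure of $W\setminus K$ charges $K$ with full mass near $C$: formally, choose $K$ to be a sphere or smooth surface in $Y$ separating $C$ from the regular part, so that for $y$ between $K$ and $C$ one has $\mu_y^{Y\setminus K}(K)=1$. Fix such a $y=:x$ and set $\mu:={}^Y\!\vx^K$; then $\mu\in M_x(\Px)$, $\supp\mu\subset K\subset\subset Y$, and $\mu(1)=R_1^K(x)=1$. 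Finally, to see $\mu\notin J_x(Y)$, test against a suitable superharmonic function on $Y$: taking $z\in C\subset\partial Y$ and using $u_z$ (for $d\ge3$) or $-u_z$ (for $d=2$, if $z$ is closer to $K$ than to $x$, or $u_z$ otherwise), exactly as in Theorem~\ref{class-infty} and Remark~\ref{remark}, one gets $\mu(u_z)>u_z(x)$ (resp.\ $\mu(-u_z)>-u_z(x)$), so $\mu$ violates the Jensen inequality for the harmonic space $Y$.

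\textbf{Main obstacle.} The delicate point is the middle step: establishing that the piece of $\partial Y$ near the ``bad'' component $C$ is polar, and hence that harmonic measure genuinely ignores $C$, giving a compact $K$ with $\{R_1^K=1\}$ non-compact. For $d\ge3$ the hypothesis that $\ov{Y_r}$ misses $C$ together with the Kellogg property (the set of irregular points is polar) does the job directly. For $d=2$ the subtlety is that boundedness of $Y$ is essential (cf.\ Remark~\ref{remark}): one must use the planar fact from \cite[Theorem 7.3.9]{gardiner-armitage} that an isolated-in-the-right-sense irregular point has arbitrarily small circles around it missing $\partial Y$, and leverage connectedness of $C$ to propagate this, ruling out the unbounded configuration. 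Packaging this into a clean statement that $G_0\setminus\partial Y\subset W$ with $C$ polar-surrounded — and then choosing $K$ and $x$ so that the elementary estimate $\mu_x^{Y\setminus K}(K)=1$ holds — is the crux; everything after that is the by-now-routine $u_z$ test.
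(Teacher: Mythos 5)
Your plan is sound and can be completed; it rests on exactly the same key lemma as the paper's proof, namely that the failure of (WRP) gives a component $C$ of $\partial Y$ with a neighborhood missing $\ov{Y_r}$, so that the Kellogg property makes $\partial Y$ polar there, \cite[Corollary 5.1.5]{gardiner-armitage} makes the complement of $\partial Y$ in that neighborhood connected, hence equal to $Y$ there, and a measure carried by a compact subset of $Y$ surrounding a point $z\in C$ is then defeated by the harmonic function $-u_z$ (harmonic on $Y$ since $z\notin Y$). Where you genuinely differ is the witnessing measure: you sweep $\ve_x$ onto a compact $K\subset Y$ separating $C$ from the rest of $\partial Y$ and must then prove $R_1^K(x)=1$, which works (the process started between $K$ and $C$ cannot leave through the polar piece of $\partial Y$ and, $Y$ being bounded, must hit $K$) and has the merit of exhibiting the failure of (TP) explicitly; but it costs you two extra steps, since a separating \emph{sphere} need not exist (you should instead take $K=\partial O$ for an open $O$ obtained from the decomposition of the compact set $\partial Y$ into relatively clopen pieces — this is also where boundedness of $Y$, i.e.\ compactness of $C$, is really used) and since the full-mass identity has to be verified. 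The paper avoids both by taking $\nu:=\mu_x^V$, the Euclidean harmonic measure of an open $V\supset C$ with $\ov V$ in the good neighborhood: $\nu(1)=1$ is automatic, $\supp\nu\subset\partial V\subset Y$, and $\nu\in M_x(\Px)$ because polar sets are removable singularities for potentials, so no balayage or hitting-probability argument is needed. Two small corrections: your claim that $C$ cannot be a singleton when $d=2$ is both false and unnecessary ($C\cap\ov{Y_r}=\emptyset$ already forces $C\subset\partial Y\setminus Y_r$ whatever its shape), and your worry that the polarity step is delicate in the plane is unfounded — for a \emph{bounded} (hence Greenian) open set in $\real^2$ the irregular boundary points form a polar set exactly as for $d\ge3$, so no separate two-dimensional argument via \cite[Theorem 7.3.9]{gardiner-armitage} is required.
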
 

\begin{proof} 
By assumption, there exists a connected component $C$ of $\partial Y$ and a connected open neighborhood $U$ of $C$ which  does not intersect $Y_r$.
Then the  set $U\cap \partial Y$ is polar, and hence   $U\setminus \partial Y$
is connected (see \cite[Corollary 5.1.5]{gardiner-armitage}). This implies that $U\setminus \partial Y=U\cap Y$. 
Let $V$ be an open set such that $C \subset V$ and $\ov V\subset U$.
We fix $z\in C$ and  $x\in V\cap Y$ such that $|x-z|<\dist(z,\partial V)$,  and define $\nu:=\mu_x^V$. 
Then $\nu\in M_x(\Px)$,  since polar sets are removable singularities for functions in~$\Px$.
However,  $-u_z>-u_z(x)$ on $\partial V$, and hence  $\nu(-u_z)>-u_z(x)$, $\nu\notin J_x(Y)$. 
\end{proof} 

Combining Theorem \ref{main-ho} with  Propositions \ref{regular} and   \ref{converse-plane} we obtain
the following result. 

\begin{corollary}\label{equiv}
For every  bounded open set $Y$ in $\reald$, $d\ge 2$,
the following statements are equivalent:
\begin{itemize}
\item {\rm (WRP)} holds.
\item {\rm (TP)} holds.
\item   $\bigcup_{U\subset \subset Y}J_x(U)=J_x(Y)=\{\mu\in M_x(\Px)\colon \supp \mu\subset\subset Y,\ \mu(1)=1\}.$
\item  $J_x(Y)=\{\mu\in M_x(\Px)\colon \supp \mu\subset\subset Y,\ \mu(1)=1\}.$
\end{itemize} 
\end{corollary}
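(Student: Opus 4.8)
The plan is to assemble Corollary~\ref{equiv} entirely from results already proved, arranging the four statements in a cycle of implications. Concretely, I would show
\[
\text{(WRP)} \Longrightarrow \text{(TP)} \Longrightarrow \Bigl(\textstyle\bigcup_{U\subset\subset Y}J_x(U)=J_x(Y)=\{\mu\in M_x(\Px)\colon \supp\mu\subset\subset Y,\ \mu(1)=1\}\Bigr) \Longrightarrow (\ref{jxm}) \Longrightarrow \text{(WRP)},
\]
so that all four are equivalent.

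First, \textbf{(WRP) $\Rightarrow$ (TP)} is exactly Proposition~\ref{regular}, applied with the standing hypothesis that $Y$ is bounded in $\reald$ (for $d\ge 3$ boundedness is automatic from the point of view of that proposition, and for $d=2$ it is part of the hypothesis of the corollary, which is precisely the extra assumption Proposition~\ref{regular} requires). Second, \textbf{(TP) $\Rightarrow$ third statement}: this is the special case $h_0=1$ of Theorem~\ref{main-ho}, whose conclusion~(\ref{mxk}) reads, with $h_0=1$, exactly as the displayed third bullet; note that Theorem~\ref{main-ho} also guarantees (\ref{ex-main}) and~(\ref{fopen}), though those are not needed here. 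Third, \textbf{third statement $\Rightarrow$ (\ref{jxm})} is trivial: the third bullet contains the equality $J_x(Y)=\{\mu\in M_x(\Px)\colon \supp\mu\subset\subset Y,\ \mu(1)=1\}$ as one of its clauses, and this holds for every $x\in Y$, which is exactly~(\ref{jxm}).

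The one implication that needs a genuine argument is \textbf{(\ref{jxm}) $\Rightarrow$ (WRP)}, and I would obtain it in contrapositive form: if (WRP) fails, then (\ref{jxm}) fails. But this is precisely Proposition~\ref{converse-plane}, which states that for a bounded open $Y$ in $\reald$, $d\ge 2$, the failure of (WRP) forces the failure of~(\ref{jxm}). So the corollary follows by chaining Proposition~\ref{regular}, Theorem~\ref{main-ho}, the trivial restriction of the third bullet, and Proposition~\ref{converse-plane}.

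The main point to be careful about — the ``obstacle'', such as it is — is bookkeeping on the dimensional hypotheses: Proposition~\ref{regular} is stated for $d\ge2$ with $Y$ bounded when $d=2$, Theorem~\ref{main-ho} is dimension-free (it holds in any non-compact $\mathcal P$-harmonic Bauer space, hence in any Greenian, in particular bounded, open subset of $\reald$), and Proposition~\ref{converse-plane} is stated for bounded $Y$ in $\reald$, $d\ge2$. Since the corollary itself assumes $Y$ bounded in $\reald$ with $d\ge2$, all three feed in without any gap, and there is nothing further to prove. I would therefore write the proof as a short paragraph recording these four citations in cyclic order.

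\begin{proof}
We prove (WRP)~$\Rightarrow$~(TP)~$\Rightarrow$~(third statement)~$\Rightarrow$~(\ref{jxm})~$\Rightarrow$~(WRP).

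If (WRP) holds, then (TP) holds by Proposition~\ref{regular} (for $d=2$ we use that $Y$ is bounded). If (TP) holds, then the third statement holds by Theorem~\ref{main-ho} applied with $h_0=1$; indeed (\ref{mxk}) with $h_0=1$ is exactly the third bullet. The third statement trivially implies (\ref{jxm}), since it contains the equality $J_x(Y)=\{\mu\in M_x(\Px)\colon \supp\mu\subset\subset Y,\ \mu(1)=1\}$ for every $x\in Y$. Finally, if (WRP) does not hold, then (\ref{jxm}) does not hold, by Proposition~\ref{converse-plane}; equivalently, (\ref{jxm})~$\Rightarrow$~(WRP).
\end{proof}
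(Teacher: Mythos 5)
Your proof is correct and follows exactly the route the paper intends: the paper's "proof" of Corollary~\ref{equiv} is the single sentence that it follows by combining Theorem~\ref{main-ho} with Propositions~\ref{regular} and~\ref{converse-plane}, and your cyclic chain (WRP)~$\Rightarrow$~(TP)~$\Rightarrow$~(third)~$\Rightarrow$~(\ref{jxm})~$\Rightarrow$~(WRP) is precisely how those three results assemble. Your bookkeeping of the dimensional and boundedness hypotheses is also accurate.
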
 

\begin{remark}{\rm
Again, the boundedness of $Y$ cannot be dropped. For $d=2$ see the example discussed in Remark \ref{remark}, where
only (WRP) holds. Suppose now that  $d\ge 3$, let $z_0:=(1,0,\dots,0)$,  and $Y:=\real^d\setminus \real z_0$. Then $\partial Y=\real z_0$
and $Y_r=\emptyset$ so that (WRP) is  not satisfied. But (TP) and the other two properties hold, since $y\mapsto |y|^{2-d}$ is a potential 
on $Y$. However,  \emph{all} properties are satisfied, if $Y:=B(0,1)\setminus \real z_0$.
}
\end{remark}

Finally, let $K$ be a compact set in (a general $\mathcal P$-harmonic Bauer space ) $Y$ and $x\in K$. Then it is, of course, possible to define
$$
           J_x(K):=\bigcap_{K\subset U } J_x(U)
                 $$
(see \cite{perkins-thesis}). However,  this does not yield anything new (see \cite[Sections VII.8 and VII.9]{BH}). Indeed, clearly
$$
            S_0(K):=\bigcup_{K\subset U}S(U)|_K =\bigcup_{K\subset U} (\S(U)\cap \C(U))|_K.  
$$
Let $G$ be the fine interior of $K$. Then, by \cite[VII.9.2]{BH}, the uniform closure of~$S_0(K)$ is the set~$S(K,G)$ of all continuous real functions~$u$ on~$K$ 
such that $\ve_y^\vc(u)\le u(y)$, for all finely open~$V\subset \subset G$ and all $y\in V$. Thus
\begin{equation}\label{JK}
        J_x(K)=M_x(S_0(K))=M_x(S(K,G)).
\end{equation} 
By \cite[VII.9.5]{BH}, $M_x(S(K,G))$ is a closed face of $M_x(\Px)$ and
\begin{equation}\label{JKe}
     \ext M_x(S(K,G))=\{\vx\}\cup \{\vx^{B^c}\colon B\mbox{ Borel},\ x\in B\subset K\}.
\end{equation} 

\bibliographystyle{plain}

{\small \noindent 
Wolfhard Hansen,
Fakult\"at f\"ur Mathematik,
Universit\"at Bielefeld,
33501 Bielefeld, Germany, e-mail:
 hansen$@$math.uni-bielefeld.de}\\
{\small \noindent Ivan Netuka,
Charles University,
Faculty of Mathematics and Physics,
Mathematical Institute,
 Sokolovsk\'a 83,
 186 75 Praha 8, Czech Republic, email:
netuka@karlin.mff.cuni.cz}

\end{document}